\documentclass{amsart}

\pdfpagewidth 8.5in
\pdfpageheight 11in

\usepackage{graphicx}
\usepackage{amsrefs}

\newtheorem{theorem}{Theorem}
\newtheorem{proposition}[theorem]{Proposition}
\newtheorem{lemma}[theorem]{Lemma}
\theoremstyle{definition}
\newtheorem{remark}[theorem]{Remark}

\numberwithin{theorem}{section}
\numberwithin{equation}{section}

\newcommand{\R}{\mathbb{R}}
\newcommand{\Z}{\mathbb{Z}}
\newcommand{\N}{\mathbb{N}}
\newcommand{\ep}{\varepsilon}
\newcommand{\trace}{\operatorname{trace}}
\newcommand{\abs}[1]{|#1|}


\begin{document}

\title{Convergence of the Abelian Sandpile}

\author{Wesley Pegden}
\address{Courant Institute at New York University, 251 Mercer Street, New York, NY 10012}
\email{pegden@math.nyu.edu}

\author{Charles K. Smart}
\address{Courant Institute at New York University, 251 Mercer Street, New York, NY 10012}
\email{csmart@math.nyu.edu}

\date{\today}
\keywords{abelian sandpile, asymptotic shape, obstacle problem}
\subjclass[2010]{60K35, 35R35} 
\thanks{The authors were partially supported by NSF grants DMS-1004696 and DMS-1004595.}

\begin{abstract}
The Abelian sandpile growth model is a diffusion process for configurations of chips placed on vertices of the integer lattice $\mathbb{Z}^d$, in which sites with at least $2d$ chips {\em topple}, distributing $1$ chip to each of their neighbors in the lattice, until no more topplings are possible.  From an initial configuration consisting of $n$ chips placed at a single vertex, the rescaled stable configuration seems to converge to a particular fractal pattern as $n\to \infty$.  However, little has been proved about the appearance of the stable configurations.   We use PDE techniques to prove that the rescaled stable configurations do indeed converge to a unique limit as $n \to \infty$.  We characterize the limit as the Laplacian of the solution to an elliptic obstacle problem.
\end{abstract}

\maketitle


\section{Introduction}
\label{introduction}

The Abelian sandpile growth model is a diffusion process for configurations of chips placed on vertices of the $d$-dimensional integer lattice $\Z^d$.  When a vertex in $\Z^d$ \emph{topples}, it loses $2d$ chips itself, adding one chip to each of its $2d$ neighbors in the lattice.    For any finite nonnegative initial distribution of chips, one can topple vertices with at least $2d$ chips until no more such topplings are possible, and, remarkably, neither the final configuration nor the number of topplings which occur at each vertex depend on the order in which topplings are performed --- the process is in this sense Abelian, as first noticed in  \cite{Dhar, Diaconis-Fulton}.

Introduced in \cite{Bak-Tang-Wiesenfeld}, the sandpile growth model has been the subject of extensive study over the past two decades, both on $\Z^d$ and on general graphs.  Nevertheless, many of the earliest and most fundamental questions regarding the sandpile remain unanswered (see e.g. \cite{Levine-Propp}).   Some of the most nagging concern the final configuration $s_n:\Z^d\to \{0,1,\dots,2d-1\}$ obtained from starting with $n$ chips placed at the origin of the integer lattice.  Terminal configurations for several values of $n$ and $d = 2$ are shown in Figure \ref{sandsequence}.  Rescaled by a factor of $n^{1/d}$, the piles seem to converge to a fractal-like limiting image, but essentially no progress has been made in proving precise things about this limit. (For example, there is no proof that there are regions in the limit which have constant value 3, no proof that its boundary does not converge to a circle, etc.)  In this paper, we show at least that the limit exists as an object to study --- the rescaled sandpile does converge.

\begin{figure}
\begin{center}
\includegraphics[width=.35\linewidth]{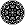}\hspace{1em}
\includegraphics[width=.35\linewidth]{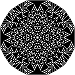}\\
\vspace{1em}
\includegraphics[width=.35\linewidth]{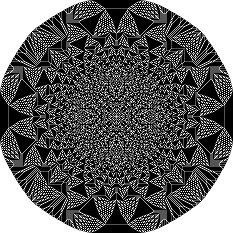}\hspace{1em}
\includegraphics[width=.35\linewidth]{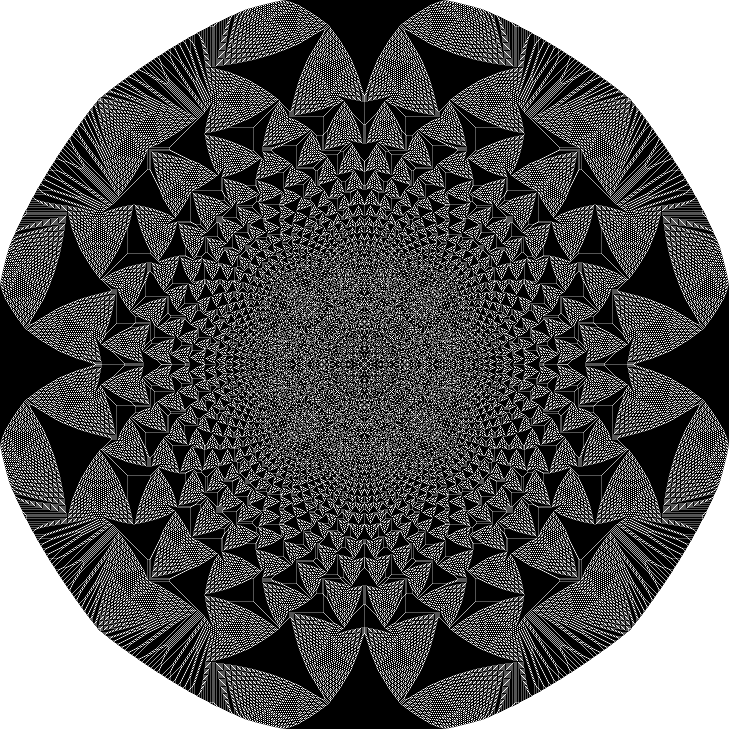}
\end{center}
\caption{Sandpiles started from $n = 10^3$, $n = 10^4$, $n = 10^5$, and $n = 10^6$ chips placed at the origin of $\Z^2$, rescaled by factors of $n^{1/2}$.  Sites with 0, 1, 2, and 3 chips are represented by white, light gray, dark gray, and black, respectively.\label{sandsequence}
} 
\end{figure}

\begin{theorem}
\label{main}
The rescaled sandpiles $\bar s_n(x) := s_n(n^{1/d}x)$ converge weakly-$*$ to a function $s \in L^\infty(\R^d)$ as $n \to \infty$. Moreover, the limit $s$ satisfies $\int_{\R^d} s \,dx = 1$, $0 \leq s \leq 2d - 1$, and  $s = 0$ in $\R^d \setminus B_R$ for some $R > 0$.
\end{theorem}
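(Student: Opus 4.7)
The plan is to lift the discrete problem to a continuous PDE by introducing the odometer function $u_n : \Z^d \to \N$, which records the number of topplings at each site. Chip conservation gives the key identity $s_n = n \mathbf{1}_{\{0\}} + \Delta u_n$, where $\Delta$ denotes the discrete Laplacian. Moreover, the least-action principle identifies $u_n$ as the pointwise smallest nonnegative function on $\Z^d$ satisfying $n\mathbf{1}_{\{0\}} + \Delta u_n \leq 2d-1$ everywhere; this is a discrete obstacle problem with a singular source at the origin.

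Rescaling at the natural diffusive scale, I would set $\bar u_n(x) := n^{-2/d} u_n(\lfloor n^{1/d} x \rfloor)$ and $h := n^{-1/d}$. The identity above becomes
\[
\bar s_n = \delta_0^{(h)} + \Delta_h \bar u_n,
\]
where $\Delta_h$ is the discrete Laplacian on $h\Z^d$ and $\delta_0^{(h)}$ is the lattice point mass normalized so that $\int \delta_0^{(h)} \, dx = 1$. Using the known diameter bound for abelian sandpiles, $\operatorname{supp} \bar u_n \subset B_R$ for some $R$ independent of $n$, and standard Lipschitz regularity for obstacle problems (applied to $\bar u_n$ after subtracting a discrete Newtonian potential for $\delta_0^{(h)}$) produces uniform bounds on $\bar u_n$ and its discrete gradient on compact subsets of $\R^d \setminus \{0\}$. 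Arzel\`a--Ascoli then yields subsequential limits $\bar u_{n_k} \to u$ locally uniformly away from the origin.

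The critical step is to identify any such $u$ as the unique solution of a continuous obstacle problem on $\R^d$: the smallest function $u \geq 0$ with $\delta_0 + \Delta u \leq 2d-1$ distributionally. Equivalently, after subtracting the Newtonian potential $\Phi$ with $\Delta \Phi = -\delta_0$, $v := u - \Phi$ is the smallest function with $v \geq -\Phi$ and $\Delta v \leq 2d-1$, a classical elliptic obstacle problem with $L^\infty$ right-hand side. The supersolution inequality passes from discrete to continuous immediately via uniform convergence, and the minimality of $\bar u_n$ passes to minimality of $u$ by discretizing smooth continuous competitors and comparing with $u_n$ via the least-action principle. Classical uniqueness of the continuous obstacle solution then forces the full sequence $\bar u_n$ to converge to $u$.

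With $\bar u_n \to u$ in hand, the rescaled identity gives $\bar s_n \to s := \delta_0 + \Delta u = \Delta v$ in the sense of distributions. The uniform pointwise bound $0 \leq \bar s_n \leq 2d-1$ upgrades this to weak-$*$ convergence in $L^\infty(\R^d)$ and gives $0 \leq s \leq 2d-1$, while chip conservation $\int \bar s_n \, dx = 1$ and the uniform support bound pass directly to yield $\int s \, dx = 1$ and $\operatorname{supp} s \subset \overline{B_R}$. The principal obstacle I anticipate is the discrete-to-continuous passage of minimality: one must handle the point-mass singularity at the origin carefully, and construct good discrete analogues of continuous supersolutions so that the least-action property of $u_n$ really does translate into the minimality that characterizes the continuous obstacle solution.
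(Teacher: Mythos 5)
Your setup (odometer function, least action principle, subtracting the discrete fundamental solution, compactness via discrete a priori estimates and Arzel\`a--Ascoli) matches the paper's Lemma \ref{subsequence} and is fine. The fatal gap is in your identification of the limit. You claim $u$ is the smallest nonnegative function with $\delta_0 + \Delta u \leq 2d-1$ distributionally, i.e.\ the solution of the \emph{classical} obstacle problem, and that minimality passes to the limit ``by discretizing smooth continuous competitors and comparing with $u_n$ via the least-action principle.'' This step does not just require care --- it is false. The least action principle only admits \emph{integer-valued} competitors (they must count topplings), and rounding a smooth competitor $\varphi$ with $\Delta \varphi \leq 2d-1$ to an integer-valued lattice function introduces $O(1)$ errors in each value; since the unrescaled discrete Laplacian $\Delta^1$ is a sum of $2d+1$ such values with no further normalization, the constraint $\Delta^1 u \leq 2d-1$ can fail by an amount that does not vanish as $n \to \infty$. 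This integrality rigidity is exactly why the true limit is \emph{not} the classical obstacle problem solution: if it were, $s$ would equal $2d-1$ on a ball and $0$ outside (as for the divisible sandpile of Levine--Peres), contradicting the fractal structure visible in Figure \ref{sandsequence}. The correct characterization (Remark \ref{characterization}) carries an additional constraint $G(D^2 w' + D^2\Phi) \leq 0$, where $G(A)$ measures whether an integer-valued lattice function with $\Delta^1 u \leq 2d-1$ can grow at least like the quadratic form $\frac12 y^t A y$; this extra condition is invisible to your distributional inequality.

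Because no clean continuum characterization of the above form is available a priori, the paper proves uniqueness differently: it assumes two distinct subsequential limits $w, w'$, uses an Alexandrov--Bakelman--Pucci-type result (Proposition \ref{abp}) to find a point where $w - w'$ has an interior maximum, both are twice differentiable, and $D^2 w \leq D^2 w' - 2\ep I$, and then uses Lemma \ref{approx} to transplant the discrete approximation of $v'$ onto the (incompatible) lattice scale of $v_{n_k}$, producing an integer-valued competitor that ``lowers'' $v_{n_k}$ and contradicts the least action principle. Lemma \ref{approx} --- the one-sided asymptotic expansion built from overlapping translated and tilted copies of $v_{n'}$ --- is the key construction your proposal is missing; without it, or some substitute for comparing approximations living on different lattices, the uniqueness argument does not close.
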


\noindent See Remark \ref{characterization} for the characterization the limiting $s \in L^\infty(\R^d)$ as the Laplacian of the solution of an elliptic obstacle problem.

We remark that weak-$*$ convergence seems to be the correct notion of convergence for the rescaled sandpiles. 
Recall that $L^\infty(\R^d)$ denotes the space of bounded measurable functions on $\R^d$ and that $C_0(\R^d)$ denotes the continuous functions on $\R^d$ with compact support. A sequence of functions $\bar s_n\in L^\infty(\R^d)$ converges weakly-$*$ to a function $s\in L^\infty(\R^d)$ in $L^\infty(\R^d)$ if
\begin{equation*}
\int_{\R^d} \bar s_n \varphi \,dx \to \int_{\R^d} s \varphi \,dx \quad \mbox{as } n \to \infty,
\end{equation*}
for all test functions $\varphi\in C_0(\R^d)$. If we look closely at the gray regions of the sandpiles (see Figure \ref{sandzoom}), we see that they consist of rapidly oscillating patterns, as first noticed in \cite{Ostojic}.  It seems clear that no kind of pointwise convergence of the sandpiles can hold, assuming this behavior persists as $n\to \infty$.  Instead, the sandpiles converge in the sense that the oscillating regions converge to their average value; there is some limiting image $s$ in which the ``colors'' of points are real numbers, approximated in the sequence $\bar s_n$ by patterns of integers.  The convergence of a sequence of bounded functions in local average value is precisely what weak-$*$ convergence in $L^\infty(\R^d)$ is designed to capture \cite{Evans}.

\begin{figure}
\begin{center}
\includegraphics[width=.8\linewidth]{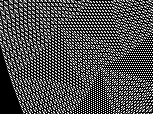}
\end{center}
\caption{Periodic structure in the ``gray'' regions of the $n = 16 \cdot 10^6$ sandpile on $\Z^2$.  Notice the patterned triangular shapes. \label{sandzoom}}
\end{figure}

Note that the properties of $s$ stated in the second part of the theorem are to demonstrate that the weak-$*$ convergence is not for trivial reasons such as a poor choice of the rescaling: the mass of the limit is 1 and is not being ``rescaled away''; similarly, the rescaled sandpiles are contained within a ball of a fixed size. The best known upper bound on the diameter of the this ball \cite{Levine-Peres} is that for every $\ep > 0$ there is a $C = C(\ep, d) > 0$ such that
\begin{equation}
\label{lp-bound}
\{ s_n > 0 \} \subseteq B_{((d-\ep) |B_1|)^{- 1/d}n^{1/d} + C}
\end{equation}
for all $n > 0$. Here $|B_1|$ denotes the volume of the unit ball.

\bigskip

Although our goal will be the characterization of final chip configurations, we will really be studying the number of topples which occur at each vertex as an initial configuration stabilizes. Given an initial configuration of chips and some finite sequence $\{p_i\}\subset \Z^d$ of points at which we topple piles of chips, the sequence is \emph{legal} if the sequence never topples any vertex with fewer than $2d$ chips, and \emph{stabilizing} if the final configuration has fewer than $2d$ chips at each vertex.  Given any legal sequence $p_1,\dots,p_s$ and any stabilizing sequence $q_1,\dots,q_t$ both beginning from some fixed intial configuation, we must have $p_1=q_{i_1}$ for some $i_1$, since there must be at least $2d$ chips at the point $p_1$ in the initial configuration, since $\{p_i\}$ is legal, and these chips must topple in any stabilizing sequence.  Since the permuted sequence $q_{i_1},q_1,\dots,q_{{i_1}-1},q_{{i_1}+1},\dots,q_t$ is also stabilizing, we can apply the same argument again from the configuration obtained after toppling just at $p_1=q_{i_1}$, to get that there must be a $q_{i_2}=p_2$.  Continuing in this manner, we obtain that the sequence $\{p_i\}$ is a permutation of some subsequence of $\{q_i\}$.

The above argument, a simplification of what appeared in \cite{Diaconis-Fulton}, implies that for any fixed initial configuration of chips, any two legal stabilizing sequences must be permutations of each other.  Coupled with the fact that any finite initial configuration of chips on $\Z^d$ will eventually stabilize after some legal sequence of topples, this implies that there is a well-defined \emph{odometer function} $v:\Z^d\to \N$ associated to any initial configuration, which counts the number of topples which will occur at points of the lattice in any legal stabilizing sequence of topples.  This also implies the Abelian property of the sandpile, as the final chip configuration $s$ is uniquely determined by the odometer function.  Note that it can be computed simply as
\begin{equation}
s(x)=\eta(x)+\sum_{y\sim x}(v(y)-v(x)),
\end{equation}
where $\eta$ is the initial configuration of chips, and the sum is taken over the $2d$ neighbors $y$ of $x$ in the lattice $\Z^d$.  Thus we have 
\begin{equation*}
s=\eta+\Delta^1v,
\end{equation*}
where $\Delta^1$ is the $(2d+1)$-point discrete Laplacian on $\Z^d$.

The starting point of our proof is the {\em least action principle} formulated in \cite{Fey-Levine-Peres}.  Suppose that $u$ is the odometer function for some initial configuration $\eta$ of chips, and $v:\Z^d\to \N$ is any function that satisfies 
\begin{equation*}
\eta + \Delta^1 v \leq 2d - 1.
\end{equation*}
The least action principle states that we must have $u\leq v$.  Note that this follows immediately from the fact that legal sequences are permutations of subsequences of stabilizing sequences, since $u$ corresponds to a stabilizing \emph{and} legal sequence, while $v$ corresponds to a stablizing, not necessarily legal sequence.

The important consequence of the least action principle for us is that the odometer function associated with a configuration is the pointwise minimum of all functions $v:\Z^d\to \N$ satisfying $\eta+\Delta^1v\leq 2d-1$, where $\eta:\Z^d\to \N$ is the initial configuration of chips.  In our case, if $v_n$ is the odometer function resulting from an initial configuration of $n$ chips at the origin, we have
\begin{equation}
v_n=\min\{v : \Z^d\to \N\mid n\delta_0+\Delta^1v\leq 2d-1\},
\label{l.least}
\end{equation}
where $\delta_0$ is the characteristic function of the set $\{0\}\subset \Z^d$.  The final configuration of chips is then
\begin{equation*}
s_n:=n\delta_0+\Delta^1v_n.
\end{equation*}

This description of $v_n$ and $s_n$, together with standard estimates for the $(2d+1)$-point Laplacian $\Delta^1$, will easily give convergence of the rescaled $\bar s_n$ and the rescaled odometer function $\bar v_n(x) := n^{-2/d} v_n(n^{1/d} x)$ along \emph{subsequences} $n_k \to \infty$.

To obtain convergence, we show that the limiting $s$ and $v$ are independent of the choice of subsequence.  Assuming there are two distinct limits $v$ and $v'$, we use the regularity theory of the Laplacian to select a point $x \in \R^d \setminus \{ 0 \}$ where $v$ and $v'$ are sufficiently smooth and $v - v'$ is strictly concave. We then select sufficiently close finite approximations $v \approx \bar v_n$ and $v' \approx \bar v_{n'}$ and use $\bar v_{n'}$ to ``lower'' $\bar v_n$, contradicting the least action principle for $v_n$. The difficultly lies in the fact that $\bar v_n$ and $\bar v_{n'}$ are defined on possibly incompatible lattices.  We overcome this obstacle using an approximation argument (see Lemma \ref{approx}) that allows us to change the scale of $\bar v_{n'}$.

\bigskip

\noindent {\em Acknowledgements.} The authors would like to thank Robert V. Kohn for several useful discussions and Lionel Levine for drawing our attention to the problem and for making comments on an early draft.


\section{Preliminaries}
\label{preliminaries}


\subsection{Notation}

We write $h := n^{- 1/d}$. We define the open ball $B_r(x) := \{ y \in \R^d \mid |y - x| < r \}$ for $r > 0$ and $x \in \R^d$ and define $B_r := B_r(0)$.  We write $\partial \Omega$ for the boundary of an open set $\Omega \subseteq \R^d$.  A function $u \in C(\R^d)$ is twice differentiable at a point $x \in \R^d$ if there is a vector $Du(x) \in \R^d$ and a symmetric $d \times d$ matrix $D^2 u(x) \in S_d$ such that
\begin{equation*}
u(y) = u(x) + Du(x) \cdot (y - x) + \frac{1}{2} (y - x)^t D^2 u(x) (y - x) + o(|y - x|^2).
\end{equation*}
We write $y \sim_h x$ when $y - x \in h \Z^d := \{ h x \mid x \in \Z^d \}$ and $|y - x| = h$. Note that $y \sim_h x$ if and only if $y = x \pm h e_i$ for some coordinate vector $e_i$. Given a subset of the lattice $E \subseteq h \Z^d$, we write
\begin{equation*}
\partial^h E := \{ y \in h \Z^d \setminus E \mid y\sim_h x \mbox{ for some }x\in E\}
\end{equation*}
for its lattice boundary.


\subsection{Interpolation convention}

Throughout this article, we consider sequences of functions $u_n : h \Z^d \to \R$. We implicitly extend all such functions to $\R^d$ via nearest-neighbor interpolation. That is, we set
\begin{equation*}
u_n(x) := u_n(h \lfloor h^{-1} x \rceil)
\end{equation*}
to be the value of $u_n$ at the lattice point $h \lfloor h^{-1} x \rceil \in h \Z^d$ closest to $x \in \R^d$. Here $\lfloor \cdot \rceil$ denotes coordinate-wise rounding to the nearest integer (rounding down for ties).  This convention allows us to make sense of statements like ``$u_n \to u \in C(\R^d)$ locally uniformly as $n \to \infty$'' when the functions $u_n$ are only defined on the lattice $h \Z^d$.


\subsection{The Laplacian}

The Laplacian
\begin{equation*}
\Delta u := \trace(D^2 u) = \sum_{i = 1}^d \frac{\partial^2}{\partial x_i^2} u,
\end{equation*}
and the $(2d+1)$-point discrete Laplacian
\begin{equation*}
\Delta^h u(x) := \frac{1}{h^2} \sum_{y\sim_h x} (u(y) - u(x))
\end{equation*}
play a central role in our analysis of the sandpile.  Recall that if $\varphi \in C^\infty(\R^d)$, then $\Delta^h \varphi \to \Delta \varphi$ locally uniformly in $\R^d$ as $h \to 0$.

From \cite{Lawler-Limic}, we know that the discrete Laplacian $\Delta^h$ has a ``fundamental solution'' $\Phi_n : h \Z^d \to \R$ that satisfies
\begin{equation*}
\Delta^h \Phi_n(x) = \begin{cases} - n & \mbox{if } x = 0, \\ 0 & \mbox{if } x \neq 0, \end{cases}
\end{equation*}
for all $x \in h \Z^d$. The function $\Phi_n(x)$ can be realized as a normalization of the expected number of visits of a random walk to the point $x$ on the lattice $h\Z^d$, adjusting for recurrences in the case $d = 2$. As $n \to \infty$, $\Phi_n \to \Phi \in C^\infty(\R^d \setminus \{ 0 \})$ locally uniformly in $\R^d \setminus \{ 0 \}$, where
\begin{equation*}
\Phi(x) := \begin{cases}
(d (d-2) |B_1|)^{-1} |x|^{2 - d} & \mbox{if } d \geq 3, \\
- (2 \pi)^{-1} \log |x| & \mbox{if } d = 2,
\end{cases}
\end{equation*}
is the ``fundamental solution'' of the Laplacian. We use the convergence of $\Phi_n \to \Phi$ to resolve the formation of a singularity in $\bar v_n$ as $n \to \infty$.

The discrete Laplacian is {\em monotone}: $\Delta^h u(x)$ is decreasing in $u(x)$ and increasing in $u(y)$ for any $y \sim_h x$. An obvious consequence of monotonicity is the following estimate for the discrete Laplacian of the point-wise minimum of two functions:

\begin{proposition}
\label{hlap-inf}
If $u, v : h \Z^d \to \R$, $w := \min \{ u, v \}$, and $w(x) = u(x)$, then $\Delta^h w(x) \leq \Delta^h u(x)$.
\end{proposition}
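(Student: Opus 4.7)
The plan is to unpack the definition of the discrete Laplacian at the point $x$ and use the two facts we have: $w \le u$ pointwise (as the min of $u$ and $v$) and $w(x) = u(x)$ by hypothesis. The point is that $w(x)$ appears with a negative sign in $\Delta^h w(x)$, so replacing $w(x)$ by $u(x)$ there changes nothing, while $w(y)$ appears with a positive sign at each neighbor $y \sim_h x$, so replacing $w(y)$ by the larger value $u(y)$ only increases the sum.

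Concretely, I would write
\begin{equation*}
\Delta^h w(x) = \frac{1}{h^2} \sum_{y \sim_h x} \bigl( w(y) - w(x) \bigr) = \frac{1}{h^2} \sum_{y \sim_h x} \bigl( w(y) - u(x) \bigr),
\end{equation*}
using $w(x) = u(x)$. Then for each neighbor $y \sim_h x$ we have $w(y) = \min\{u(y), v(y)\} \le u(y)$, so term by term $w(y) - u(x) \le u(y) - u(x)$. Summing these $2d$ inequalities and dividing by $h^2$ yields
\begin{equation*}
\Delta^h w(x) \le \frac{1}{h^2} \sum_{y \sim_h x} \bigl( u(y) - u(x) \bigr) = \Delta^h u(x),
\end{equation*}
which is the desired inequality.

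There is really no obstacle here; the proof is a one-line consequence of monotonicity, which is precisely the property the proposition is labeled as illustrating. The only thing to be careful about is the sign convention in the definition of $\Delta^h$, namely that the center value $w(x)$ (where the minimum is \emph{attained}) enters with a minus sign while the neighboring values $w(y)$ (which may be strictly smaller than $u(y)$) enter with a plus sign; this is exactly what makes the inequality go in the stated direction.
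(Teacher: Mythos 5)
Your proof is correct and is exactly the argument the paper intends: the paper states this proposition as "an obvious consequence of monotonicity" without writing out the details, and your term-by-term comparison (center value unchanged since $w(x)=u(x)$, neighbor values only decrease since $w\le u$) is precisely that monotonicity argument made explicit.
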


\noindent A less obvious consequence of monotonicity is the fact that $\Delta^h$ has a maximum principle \cite{Kuo-Trudinger}:

\begin{proposition}
\label{hlap-compare}
For $u, v : h \Z^d \to \R$, $E \subseteq h \Z^d$ finite, and $\Delta^h u \geq \Delta^h v$ in $E$, we have $\max_{\partial^h E} (u - v)\geq \max_E (u - v)$.  That is, the maximum difference must occur on the lattice boundary.
\end{proposition}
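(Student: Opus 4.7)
The plan is to reduce to a nonnegative discrete Laplacian and then run a standard weak maximum principle argument. Set $w := u - v$; since $\Delta^h$ is linear, the hypothesis gives $\Delta^h w \geq 0$ on $E$. Suppose for contradiction that $M := \max_E w$ strictly exceeds $\max_{\partial^h E} w$, and let $S := \{ x \in E : w(x) = M \} \subseteq E$, which is nonempty and finite.

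The main step is a propagation property: at any $x \in S$, every lattice neighbor $y \sim_h x$ lies in $E \cup \partial^h E$ by the very definition of $\partial^h E$, and so $w(y) \leq M$ (using the contradiction hypothesis for the $\partial^h E$ portion). But then $\Delta^h w(x) = h^{-2} \sum_{y \sim_h x} (w(y) - M) \geq 0$ forces $w(y) = M$ for every neighbor $y$ of $x$. This is essentially just monotonicity of $\Delta^h$, in the spirit of Proposition \ref{hlap-inf}.

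To finish, I would exploit the finiteness of $E$ without any connectedness hypothesis. Choose $x^* \in S$ with the largest first coordinate and set $y := x^* + h e_1$. By the propagation step, $w(y) = M$. If $y \in E$, then $y \in S$ has a strictly larger first coordinate than $x^*$, contradicting the choice of $x^*$; if instead $y \in \partial^h E$, then $\max_{\partial^h E} w \geq M$, contradicting our standing assumption. Either horn is a contradiction, completing the argument. I do not anticipate any genuine obstacle here: the proof is entirely elementary and uses only monotonicity of $\Delta^h$ together with finiteness of $E$, with the extremal-coordinate trick serving only to sidestep any connectedness consideration.
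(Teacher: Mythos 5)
Your argument is correct. The paper itself gives no proof of this proposition---it is quoted from \cite{Kuo-Trudinger} as a known consequence of monotonicity---so there is nothing to compare against line by line; your write-up simply supplies the standard discrete weak maximum principle argument in a self-contained way. The two key points both check out: every $y \sim_h x$ with $x \in E$ lies in $E \cup \partial^h E$ by the definition of $\partial^h E$, so at a point of the (nonempty, by finiteness of $E$) maximal set $S$ the inequality $\Delta^h w(x) = h^{-2}\sum_{y \sim_h x}(w(y) - M) \geq 0$ with every summand $\leq 0$ forces equality in each term; and the extremal-first-coordinate point $x^*$ of $S$, which exists because $E$ is finite, then hands you a contradiction whichever of $E$ or $\partial^h E$ contains $x^* + h e_1$. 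This cleanly avoids any connectedness or chain-of-neighbors bookkeeping, and uses only the monotonicity and linearity of $\Delta^h$, exactly in the spirit the paper attributes to the result.
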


Finally, we recall a consequence of the standard a priori estimates for the finite difference Laplacian and the Arzela-Ascoli theorem \cite{Kuo-Trudinger}:

\begin{proposition}
\label{hlap-precompact}
If $u_n : h \Z^d \to \R$ is a sequence of functions that satisfies
\begin{equation*}
\max_{B_1}\abs{u_n}\leq C \quad \mbox{and}\quad \max_{B_1} \abs{\Delta^h u_n } \leq C,
\end{equation*}
then for every sequence $n_k \to \infty$ there is subsequence $n_{k_j} \to \infty$ and a function $u \in C(B_1)$ such that $u_{n_{k_j}} \to u$ locally uniformly in $B_1$ as $j \to \infty$.
\end{proposition}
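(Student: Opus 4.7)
The plan is to combine a discrete interior H\"older estimate with the Arzela--Ascoli theorem, using a diagonal extraction over an exhaustion $B_{r_k} \subset B_1$ with $r_k \uparrow 1$ to upgrade convergence on compact subsets to local uniform convergence on all of $B_1$.

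First I would invoke the discrete Krylov--Safonov estimate from Kuo--Trudinger applied to $\Delta^h$: since $\abs{u_n} \leq C$ and $\abs{\Delta^h u_n} \leq C$ on $B_1$, for each $r \in (0, 1)$ there exist $\alpha = \alpha(d) \in (0,1)$ and $C' = C'(d, r, C) > 0$ such that
\begin{equation*}
\abs{u_n(x) - u_n(y)} \leq C' (\abs{x - y} + h)^\alpha
\end{equation*}
for all $x, y \in B_r \cap h \Z^d$. Since the nearest-neighbor interpolation convention shifts each argument by at most $h$, the $\R^d$-extensions satisfy the same inequality on $B_r$ after enlarging the constant by an absolute factor.

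Next I would apply Arzela--Ascoli on each $\overline{B_{r_k}}$: the sequence is uniformly bounded by hypothesis, and the H\"older bound above provides uniform equicontinuity (the additive $h^\alpha$ term vanishes as $n \to \infty$ and is independent of $\abs{x-y}$). Hence a subsequence converges uniformly on $\overline{B_{r_k}}$ to an $\alpha$-H\"older, and in particular continuous, limit. A standard diagonal extraction over $r_k \uparrow 1$ then produces a single subsequence $u_{n_{k_j}}$ converging locally uniformly on $B_1$ to some $u \in C(B_1)$.

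The real content of the proof is the discrete H\"older estimate, which is exactly why the statement credits Kuo--Trudinger; given that input as a black box, what remains is a routine Arzela--Ascoli plus diagonal argument. The only bookkeeping subtlety is tracking the interpolation error introduced by extending lattice functions to $\R^d$, but since this error is already of order $h$ it is absorbed by the $h^\alpha$ term already present in the H\"older bound.
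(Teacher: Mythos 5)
Your argument is correct and matches the paper's intent exactly: the paper states this proposition without proof, citing it as a consequence of the Kuo--Trudinger discrete a priori estimates together with Arzela--Ascoli, which is precisely the discrete H\"older bound plus diagonal extraction you carry out. The one point worth stating explicitly (you gesture at it) is that the interpolated $u_n$ are step functions, so classical equicontinuity fails for each fixed $n$; one needs the standard variant of Arzela--Ascoli for sequences satisfying $\abs{u_n(x)-u_n(y)}\leq \omega(\abs{x-y})+\ep_n$ with $\ep_n\to 0$, which your bound provides since $h = h_{n}\to 0$.
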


After we make some additional preparations, the above two propositions will allow us to show that the $\bar v_n$ converge along subsequences. It remains to recall some standard facts about the continuum Laplacian which we use to study the regularity of the possible limits of the $\bar v_n$.

If $\Omega \subseteq \R^d$ is open and $s \in L^\infty(\Omega)$, then we say that $u \in C(\Omega)$ is a {\em weak solution} (see \cite{Gilbarg-Trudinger}) of the continuum Laplace equation
\begin{equation*}
\Delta u = s \mbox{ in } \Omega,
\end{equation*}
if and only if
\begin{equation}
\int_{\Omega} u \Delta \varphi \,dx = \int_{\Omega} s \varphi \,dx
\label{l.lapparts}
\end{equation}
for all test functions $\varphi \in C^\infty_0(\Omega)$. Note that if $u$ and $s$ happen to be smooth, then we can we can intergrate by parts to show that this is equivalent to $\Delta u = s$ holding pointwise.

The following is a consequence of a much stronger result from the theory of singular operators  \cite[Chapter III Theorem 4]{Christ}:

\begin{proposition}
\label{bmo}
If $s\in L^\infty(B_1)$ and $u \in C(B_1)$ is a weak solution of $\Delta u=s$ in $B_1$, then $u$ is twice differentiable and $\Delta u = s$ almost everywhere in $B_1$.
\end{proposition}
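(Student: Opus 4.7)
The plan is to split the proof into two standard pieces: first upgrade regularity of $u$ from the Calder\'on--Zygmund/BMO estimate cited from \cite{Christ}, then apply a classical blow-up argument to pass from weak to classical twice-differentiability.

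First I would use the singular integral estimate. After localizing via a smooth cutoff and writing $u$ (locally) as the Newtonian potential $\Phi \ast (\chi s)$ plus a harmonic correction, each entry of the weak Hessian $D^2u$ is expressed by applying a singular Calder\'on--Zygmund kernel of the form $\partial^2_{ij}\Phi$ to $\chi s\in L^\infty$. The cited theorem in \cite{Christ} gives that such an operator maps $L^\infty$ into $\mathrm{BMO}$, so $D^2 u\in \mathrm{BMO}_{\mathrm{loc}}(B_1)$ with a bound controlled by $\|s\|_{L^\infty(B_1)}$. John--Nirenberg then upgrades this to $D^2u\in L^p_{\mathrm{loc}}(B_1)$ for every $p<\infty$, so $u\in W^{2,p}_{\mathrm{loc}}(B_1)$ for every such $p$. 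Uniqueness of weak derivatives, together with \eqref{l.lapparts}, yields $\trace(D^2 u)=s$ almost everywhere in $B_1$.

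Next I would show that such a $u$ is twice differentiable in the classical sense at almost every point of $B_1$. Fix $p>d$. By Morrey's embedding $u\in C^1_{\mathrm{loc}}(B_1)$ and $Du\in W^{1,p}_{\mathrm{loc}}(B_1)$. A standard pointwise differentiability theorem for Sobolev functions (for instance Evans--Gariepy, Chapter~6) then asserts that a $W^{1,p}$ function with $p>d$ is classically differentiable almost everywhere, with classical derivative equal to the weak one. Applying this componentwise to $Du$ produces, for a.e.\ $x\in B_1$, a symmetric matrix $A(x)$ equal to the weak Hessian $D^2u(x)$ such that
\begin{equation*}
Du(y) = Du(x) + A(x)(y-x) + o(|y-x|) \qquad \text{as } y\to x.
\end{equation*}
Integrating this expansion along the segment from $x$ to $y$ via $u(y)-u(x)=\int_0^1 Du(x+t(y-x))\cdot(y-x)\,dt$ and isolating the Taylor polynomial of degree two yields
\begin{equation*}
u(y) = u(x) + Du(x)\cdot(y-x) + \tfrac{1}{2}(y-x)^t A(x)(y-x) + o(|y-x|^2),
\end{equation*}
which is exactly the definition of twice-differentiability at $x$ with Hessian $A(x)$.

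Combining the two steps, at almost every $x\in B_1$ the function $u$ is classically twice differentiable with $D^2u(x)=A(x)$, and $\trace A(x)=s(x)$ by Step~1; hence $\Delta u(x)=s(x)$ pointwise almost everywhere. The only nontrivial ingredient is the BMO estimate in Step~1, which is precisely why the paper cites \cite{Christ} just before the proposition; the second step is essentially bookkeeping with well-known Sobolev differentiability results and Taylor's theorem along line segments.
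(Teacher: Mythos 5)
Your proof is correct and follows exactly the route the paper intends: the cited result from \cite{Christ} is the $L^\infty\to\mathrm{BMO}$ bound for Calder\'on--Zygmund operators applied to $D^2(\Phi\ast(\chi s))$, giving $u\in W^{2,p}_{\mathrm{loc}}$ for all $p<\infty$, after which a.e.\ pointwise twice differentiability follows from the standard a.e.\ differentiability of $W^{1,p}$ functions ($p>d$) applied to $Du$ together with integration along segments. The paper leaves all of this to the citation, so your write-up simply supplies the details it omits.
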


Having bounded Laplacian also implies that a function must be strictly concave on a set of positive measure in any neighborhood of a strict local maximum \cite[Theorem 3.2]{Caffarelli-Cabre}:

\begin{proposition}
\label{abp}
If  $s \in L^\infty(B_1)$ and $u \in C(B_1)$ is a weak solution of $\Delta u = s$ in $B_1$, and $\sup_{B_1} u > \sup_{\partial B_1} u$, then the set
\begin{equation*}
\{ x \in B_1 \mid u(x) > \sup_{\partial B_1} u, \ u \mbox{ is twice differentiable at } x, \mbox{ and } D^2 u(x) < 0 \},
\end{equation*}
has positive measure.
\end{proposition}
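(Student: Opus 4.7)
The plan is to follow the classical Alexandrov-Bakelman-Pucci (ABP) argument. The underlying idea is that a strict interior maximum of $u$ forces a ``concave tip'' on the graph of $u$ of quantitative size, while the arithmetic-geometric mean inequality controls the measure of this tip in terms of $\|s\|_{L^\infty}$, whose finiteness is exactly what we are given.

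For the setup, first subtract a constant so that $\sup_{\partial B_1} u = 0$ and $M := \sup_{B_1} u > 0$. By Proposition \ref{bmo}, $u$ is twice differentiable almost everywhere in $B_1$ with $\Delta u = s$. Extending $u_+ := \max(u,0)$ by zero from $\overline{B_1}$ to $\overline{B_2}$, let $w$ denote the concave envelope of this extension on $\overline{B_2}$; then $w$ is concave, $w \geq u$, $w \equiv 0$ on $\partial B_2$, and $\max w = M$. By Aleksandrov's theorem, $w$ is twice differentiable almost everywhere with $D^2 w \leq 0$ at such points.

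The heart of the argument is a normal map / area formula estimate. A supporting hyperplane argument shows that for every $p \in \R^d$ with $|p| < M/2$, the function $w(x) - p \cdot x$ attains its maximum at some interior $x_p$, and a standard lemma for concave envelopes lets us take $x_p$ in the contact set $C := \{ x \in B_1 : w(x) = u(x) > 0 \}$; thus the superdifferential of $w$ restricted to $C$ covers $B_{M/2}(0)$. At almost every $x \in C$ both $u$ and $w$ are twice differentiable, and $w \geq u$ with equality at $x$ yields $D^2 u(x) \leq D^2 w(x) \leq 0$; in particular $\trace(-D^2 u(x)) = -s(x) \leq \|s\|_{L^\infty}$, and AM-GM gives
\begin{equation*}
\abs{\det D^2 w(x)} \leq \abs{\det D^2 u(x)} \leq (\|s\|_{L^\infty}/d)^d.
\end{equation*}
Combining with the Aleksandrov area estimate $|\partial w(C)| \leq \int_C \abs{\det D^2 w(x)}\, dx$ gives $|B_{M/2}| \leq (\|s\|_{L^\infty}/d)^d \, |C|$, so $|C| > 0$. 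Since the integral is strictly positive, on a subset $C' \subseteq C$ of positive measure the Hessian $D^2 w$ is in fact strictly negative definite, and then $D^2 u \leq D^2 w < 0$ on $C'$ as well. Since $C' \subseteq \{u > \sup_{\partial B_1} u\}$ in the original coordinates, this is exactly the set claimed.

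The main difficulty is bookkeeping rather than conceptual: one must verify that Aleksandrov's a.e.\ twice-differentiability of concave functions, the area-formula inequality for the normal map, and the contact-point lemma for concave envelopes all apply cleanly here. These are precisely the ingredients packaged into the cited Caffarelli-Cabré Theorem 3.2, and in practice I would invoke that theorem directly rather than redevelop them.
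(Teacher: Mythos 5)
Your argument is the standard Alexandrov--Bakelman--Pucci normal-map estimate, which is exactly the content of the Caffarelli--Cabr\'e Theorem 3.2 that the paper cites for this proposition without further proof; the approach is therefore the same, and your unpacking of it is correct. (The radius $M/2$ of the ball covered by the normal map should be something like $M/4$ once one accounts for the distance from $B_1$ to $\partial B_2$ and the requirement that the contact point lie where $u>0$, but this constant is immaterial to the qualitative conclusion $|C|>0$.)
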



\section{Convergence along subsequences}

Note from Section \ref{introduction}, and in particular \eqref{lp-bound}, that the sandpile $s_n$ satisfies
\begin{equation}
\label{sninequalities}
\int_{\R^d} s_n \,dx = n, \quad 0 \leq s_n \leq 2d - 1 \quad \mbox{and} \quad \{ s_n > 0 \} \subseteq B_{h^{-1} R},
\end{equation}
for some $R > 0$ independent of $n$.  These facts, together with the least action principle and the results in Section \ref{preliminaries}, are enough to prove convergence of the rescaled sandpile along subsequences.

Recall that $\bar s_n(x) := s(h^{-1} x)$ and $\bar v_n(x) := h^2 v_n(h^{-1} x)$, and define
\begin{equation*}
\bar w_n := \bar v_n - \Phi_n.
\end{equation*}
Since $\Delta^h$ is linear and $\Delta^h \Phi_n = - n \delta_0$, we have
\begin{equation*}
\Delta^h \bar w_n = n \delta_0 + \Delta^h\bar v_n =\bar s_n \quad \mbox{in } \R^d.
\end{equation*}

\begin{lemma}
\label{subsequence}
For every sequence $n_k \to \infty$ there is a subsequence $n_{k_j} \to \infty$ and functions $w \in C(\R^d)$ and $s \in L^\infty(\R^d)$ such that $w$ is a weak solution of $\Delta w = s$ in $\R^d$, $\bar w_{n_{k_j}}$ converges locally uniformly to $w$ in $C(\R^d)$, and $\bar s_{n_{k_j}}$ converges weakly-$*$ to $s$ in $L^\infty(\R^d)$.
\end{lemma}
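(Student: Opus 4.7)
The plan is to extract subsequential limits for $\bar s_n$ and $\bar w_n$ separately using two compactness results, and then pass to the limit in the discrete identity $\Delta^h \bar w_n = \bar s_n$ tested against smooth functions to obtain the weak equation $\Delta w = s$.

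By \eqref{sninequalities}, the sequence $\bar s_n$ is bounded in $L^\infty(\R^d)$ by $2d-1$ and supported in a fixed ball $B_R$. Viewing these functions as elements of the dual of $L^1(\overline{B_R})$ and using separability of $L^1(\overline{B_R})$, the Banach--Alaoglu theorem furnishes a subsequence $\bar s_{n_k}$ converging weakly-$*$ in $L^\infty(\R^d)$ to some $s \in L^\infty(\R^d)$ supported in $\overline{B_R}$.

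To extract a locally uniformly convergent subsequence of $\bar w_n$ I would apply Proposition \ref{hlap-precompact} on a nested exhaustion $B_1 \subset B_2 \subset \cdots$ and diagonalize. The bound $\|\Delta^h \bar w_n\|_\infty \leq 2d-1$ is immediate, so the main obstacle is a uniform $L^\infty$ bound for $\bar w_n$ on each $B_j$. I would identify $\bar w_n$ with the discrete Newtonian potential
\begin{equation*}
\tilde w_n(x) := -h^d \sum_{y \in h\Z^d} \Phi_n(x-y)\, \bar s_n(y),
\end{equation*}
which also solves $\Delta^h \tilde w_n = \bar s_n$ because $\Delta^h \Phi_n = -n\delta_0$ and $n h^d = 1$. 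For each fixed $n$, both $\bar w_n$ and $\tilde w_n$ satisfy $f(x) = -\Phi_n(x) + o(1)$ as $|x| \to \infty$: for $\bar w_n = \bar v_n - \Phi_n$ this uses the compact support of $\bar v_n$ (the sandpile stabilizes after finitely many topples); for $\tilde w_n$ it uses the bounded support of $\bar s_n$ together with $h^d \sum_y \bar s_n(y) = 1$ and the fact that $\Phi_n(x-y) - \Phi_n(x) \to 0$ uniformly for $y$ in the support as $|x| \to \infty$ (in both $d \geq 3$ and $d = 2$). Their difference is then a bounded discrete harmonic function on $h\Z^d$ vanishing at infinity, so by discrete Liouville it vanishes identically. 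The uniform local bound on $\bar w_n = \tilde w_n$ then follows from $\|\bar s_n\|_\infty \leq 2d-1$, the uniform support $\mathrm{supp}(\bar s_n) \subseteq B_R$, and the local integrability of the continuum fundamental solution $\Phi$: the discrete sums defining $\tilde w_n$ are Riemann-type sums approximating $\int \Phi(x-y) \bar s_n(y)\, dy$, and the singular contribution $h^d \Phi_n(0)$ is $o(1)$ as $h \to 0$ in both dimensions.

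Finally, to check $\Delta w = s$ weakly, I test against $\varphi \in C_0^\infty(\R^d)$. Discrete summation by parts (self-adjointness of $\Delta^h$) gives
\begin{equation*}
h^d \sum_{x \in h\Z^d} \bar w_n(x)\, \Delta^h \varphi(x) = h^d \sum_{x \in h\Z^d} \bar s_n(x)\, \varphi(x).
\end{equation*}
Along the subsequence, the right-hand side tends to $\int s\varphi\, dx$ by weak-$*$ convergence of $\bar s_n$, while the left-hand side tends to $\int w\, \Delta\varphi\, dx$ by local uniform convergence $\bar w_n \to w$ on a neighborhood of $\mathrm{supp}(\varphi)$ together with uniform convergence $\Delta^h \varphi \to \Delta \varphi$. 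This is the weak formulation \eqref{l.lapparts} of $\Delta w = s$.
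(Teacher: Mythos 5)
Your proposal is correct and proves the same statement, but it replaces the paper's key compactness step with a genuinely different argument. Where you need a uniform local $L^\infty$ bound on $\bar w_n$, the paper exploits the sign and support information $\bar v_n \geq 0$ and $\{\bar v_n > 0\} \subseteq B_R$ to trap $\bar w_n$ between the explicit barriers $|x|^2 - (R+h)^2 + \inf_{\partial^h E}(-\Phi_n)$ and $\sup_{\partial^h E}(-\Phi_n)$ on $E = h\Z^d \cap B_R$ via the discrete maximum principle (Proposition \ref{hlap-compare}), using only the local uniform convergence $\Phi_n \to \Phi$ away from the origin that is stated in the preliminaries. You instead identify $\bar w_n$ with the discrete Newtonian potential of $\bar s_n$ through a discrete Liouville argument and then bound the potential by a Riemann-sum comparison with $\int \Phi(x-y)\bar s_n(y)\,dy$. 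This is a valid and arguably more conceptual route, but it is not self-contained relative to the paper's preliminaries: it requires quantitative, $n$-uniform bounds on $\Phi_n$ near its singularity (of the form $|\Phi_n(z)| \leq C(|z|^{2-d} + 1)$ for $d \geq 3$, and the analogous logarithmic bound for $d=2$) to control the sum near $y = x$, as well as the large-$|x|$ asymptotics of the lattice Green's function for fixed $n$ to run the Liouville identification; both are available in \cite{Lawler-Limic} but go beyond the mere local uniform convergence $\Phi_n \to \Phi$ invoked elsewhere. The remaining two steps --- Banach--Alaoglu for the weak-$*$ limit of $\bar s_n$ and passing to the limit in the discrete integration-by-parts identity --- coincide with the paper's, up to the harmless bookkeeping of translating between lattice sums and integrals of the nearest-neighbor interpolants.
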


Note that this lemma holds for any sequence $w_n : h \Z^d \to \R$ such that $w_n$ and $\Delta^h w_n$ are locally uniformly bounded.  Uniqueness of the limit is where our analysis is particular to the sandpile.

\begin{proof}
Since $\{ \bar v_n > 0 \} \subseteq B_R$ and $\bar v_n \geq 0$, we therefore have
\begin{equation*}
\bar w_n \geq - \Phi_n \quad \mbox{in } h \Z^d \quad \mbox{and} \quad \bar w_n = - \Phi_n \quad \mbox{on } h \Z^d \setminus B_R.
\end{equation*}
If we set $E := h \Z^d \cap B_R$, then the test functions
\begin{equation*}
\varphi(x) := |x|^2 - (R + h)^2 + \inf_{\partial^h E} - \Phi_n \quad \mbox{and} \quad \psi(x) := \sup_{\partial^h E} - \Phi_n,
\end{equation*}
satisfy $\varphi \leq \bar w_n \leq \psi$ on $\partial^h E$ and $2d = \Delta^h \varphi \geq \Delta^h \bar w_n \geq \Delta^h \psi = 0$ in $E$.  It follows from the maximum principle (Proposition \ref{hlap-compare}) that
\begin{equation*}
|x|^2 - (R+h)^2 + \inf_{\partial^h E} - \Phi_n \leq \bar w_n(x) \leq \sup_{\partial^h E} - \Phi_n \quad \mbox{for all } x \in B_R.
\end{equation*}
For any $R' > R$, we know that $\Phi_n \to \Phi$ uniformly in $B_{R'} \setminus B_R$ as $n \to \infty$. Thus the sequence $\bar w_n$ satisfies
\begin{equation*}
\| \bar w_n \|_{L^\infty(B_{R'})} \leq C
\end{equation*}
for some $C = C(R') > 0$ and all $n > 0$.  Since we also have $|\Delta^h \bar w_n| \leq 2d - 1$, Proposition \ref{hlap-precompact} implies the existence of a subsequence $n_{k_j}$ and a function $w \in C(\R^d)$ such that $\bar w_{n_j} \to w$ locally uniformly as $j \to \infty$. Since any uniformly bounded sequence of functions converges weakly-$*$ along subsequences \cite[Page 7]{Evans}, we may select a further subsequence and a function $s \in L^\infty(\R^d)$ such that $\bar s_{n_j}$ converges weakly-$*$ to $s$ as $j \to \infty$.

For any $\varphi \in C^\infty_0(\R^d)$, we have
\begin{equation}
\int_{\R^d} \varphi s_{n_j} \,dx= \int_{\R^d} \varphi \Delta^{h_j} \bar w_{n_j}\, dx= \int_{\R^d} (\Delta^{h_j} \varphi )\bar w_{n_j} \,dx,
\label{l.dparts}
\end{equation}
where the second equality comes from the discrete integration by parts formula, which an be carried out in this case by writing $\Delta^{h_j} \bar w_n$ as a finite sum that commutes with the integral.

Since $\bar w_{n_j} \to w$ locally uniformly in $\R^d$, it converges uniformly on any closed ball, and therefore uniformly on any bounded neighborhood of the support of $\varphi$.  Since we also have $\Delta^h \varphi \to \Delta \varphi$ uniformly in $\R^d$, we see that the right-hand side of (\ref{l.dparts}) converges to $\int_{\R^d} w \Delta \varphi \,dx$ as $j \to \infty$. The left-hand side converges to $\int_{\R^d} s \varphi \,dx$ by the definition of weak-$*$ convergence. Thus $w$ is a weak solution of $\Delta w = s$ in $\R^d$ by definition.
\end{proof}

An immediate consequence of Lemma \ref{subsequence} is that $\bar v_{n_j} \to v := w + \Phi$ locally uniformly in $\R^d \setminus \{ 0 \}$ as $j \to \infty$. However, what we obtained is strictly stronger, since it allows us to resolve the structure of the singularity of $v$.


\section{Convergence}

As discussed in Section \ref{introduction}, we would like to compare two limits $\bar v_{n_k} \to v$ and $\bar v_{n'_k} \to v'$ along different sequences $n_k \to \infty$ and $n'_k \to \infty$ by comparing their finite approximations, in spite of the fact that their approximations $\bar v_{n_k}$ and $\bar v_{n'_k}$ may not be defined on the same scale.  To deal with this, we could try construct an {\em asymptotic expansion} of the convergence $\bar v_{n_k} \to v$ in a neighborhood of each point $x \in \R^d \setminus \{ 0 \}$ where $v$ is twice differentiable. That is, we could try to find a radius $r > 0$ and a function $u : \Z^d \to \Z$ such that $\Delta^1 u \leq 2d - 1$ and
\begin{equation}
\label{expand}
\bar v_{n_k}(y) = v(x) + Dv(x) \cdot (y - x) + h_k^2 u(h_k^{-1}(y - x)) + o(|y - x|^2) + o(1),
\end{equation}
for all $y \in B_r(x)$ and $k > 0$. (Note that $o(1)$ is with respect to $k \to \infty$ while $o(|y - x|^2)$ is with respect to the distance $|y - x| \to 0$.) The existence of such an expansion would make it easy to approximate $v$ on different scales in $B_r(x)$. Indeed, while $v$ only has approximations $\bar v_{n_k}$ defined on the scales $h_k$, the right-hand side of \eqref{expand} is valid for any scale $h > 0$.

Unfortunately, we do not know how to prove that an asymptotic expansion of the form \eqref{expand} exists in general.  Instead we construct a one-sided expansion, which is sufficient for our purposes. Note that this lemma is applied below to $v_n$ that have been translated and shifted and in particular may take negative values.

\begin{lemma}
\label{approx}
Suppose $v_n : \Z^d \to \Z$ is any sequence of functions such that $\Delta^1 v_n \leq K \in \Z$. Suppose $v \in C(B_r(x_0))$ for some $r > 0$ and the rescalings $\bar v_n(x) := h^{2} v_n(h^{-1} x)$ converge uniformly $\bar v_{n_k} \to v$ in $B_r(x_0)$ along some sequence $n_k \to \infty$. If $v$ is twice differentiable at $x_0$, then for every $\ep > 0$ there is a function $u : \Z^d \to \Z$ such that
\begin{equation*}
\Delta^1 u(x) \leq K \quad \mbox{and} \quad u(x) \geq \frac{1}{2} x^t (D^2 v(x_0) - \ep I) x \quad \mbox{for all } x \in \Z^d.
\end{equation*}
\end{lemma}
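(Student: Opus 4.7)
The plan is to build $u$ as a translated, linearly-corrected version of $v_{n_k}$ for a suitably large $k$. Set $h=h_{n_k}=n_k^{-1/d}$, pick $\xi_k\in\Z^d$ nearest to $h^{-1}x_0$ and put $\beta_k:=h\xi_k-x_0$ (so $|\beta_k|\le h\sqrt{d}/2$), and pick $m_k\in\Z^d$ nearest to $g_k:=h^{-1}\bigl(Dv(x_0)+D^2v(x_0)\beta_k\bigr)$ (so $|m_k-g_k|\le\sqrt{d}/2$). The candidate is
\begin{equation*}
u(\xi):=v_{n_k}(\xi_k+\xi)-v_{n_k}(\xi_k)-m_k\cdot\xi,\qquad\xi\in\Z^d,
\end{equation*}
which is integer-valued with $u(0)=0$ and automatically satisfies $\Delta^1 u(\xi)=\Delta^1 v_{n_k}(\xi_k+\xi)\le K$ because subtracting an integer-slope linear function and an integer constant leaves the $(2d{+}1)$-point Laplacian unchanged. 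So the Laplacian bound comes for free; what remains is the quadratic lower bound.

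To verify that bound, I would combine the second-order Taylor estimate at $x_0$ (available on some $B_\rho(x_0)\subseteq B_r(x_0)$ with error $\le\eta|y-x_0|^2$ for any prescribed $\eta>0$) with the uniform convergence $\bar v_{n_k}\to v$ on $B_r(x_0)$, passing to a subsequence with $\delta_k:=\|\bar v_{n_k}-v\|_{L^\infty(B_\rho(x_0))}=o(h_k^2)$. Using $v_{n_k}(\xi_k+\xi)=h^{-2}\bar v_{n_k}(h\xi_k+h\xi)$, substituting the Taylor expansions at $y=h\xi_k+h\xi$ and $y_0=h\xi_k$, subtracting and dividing by $h^2$, a direct bookkeeping computation yields
\begin{equation*}
u(\xi)\;\ge\;\tfrac12\xi^t D^2v(x_0)\,\xi\;-\;|g_k-m_k|\,|\xi|\;-\;C\eta\bigl(|\xi|^2+1\bigr)\;-\;o_k(1)
\end{equation*}
for $\xi\in\Z^d$ with $h\xi_k+h\xi\in B_\rho(x_0)$. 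Choosing $\eta<\ep/(8C)$ and then $k$ large enough that $o_k(1)\le\ep/8$, this inequality dominates $\tfrac12\xi^t(D^2v(x_0)-\ep I)\xi$ on the intermediate range $|\xi|\in[C'/\ep,\rho/(2h_k)]$, the key point being that $|g_k-m_k||\xi|\le(\sqrt{d}/2)|\xi|$ is absorbed by the quadratic slack $(\ep/2)|\xi|^2$ once $|\xi|\gtrsim 1/\ep$.

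The main obstacle is extending the inequality to \emph{all} $\xi\in\Z^d$ while preserving $\Delta^1 u\le K$. For small $|\xi|\lesssim 1/\ep$, where the linear rounding error $(\sqrt{d}/2)|\xi|$ dominates the quadratic target, the natural remedy is to add a fixed integer constant $N$ to $u$ (which does not change the discrete Laplacian and absorbs the $O(1)$ deficit on any bounded set). For $|\xi|>\rho/(2h_k)$, where uniform convergence is no longer available, one has to invoke the global hypothesis $\Delta^1 v_{n_k}\le K$ together with the discrete maximum principle (Proposition \ref{hlap-compare}), comparing $u$ to an explicit quadratic barrier of Laplacian exactly $K$ to obtain a long-range lower bound dominating the target quadratic. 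Patching these three regimes into a single integer-valued function satisfying the global bound $\Delta^1 u\le K$ is the technical heart of the proof; the local explicit construction does almost all of the work, and the extension only has to handle a bounded neighborhood of the origin and the complement of a ball of radius $\rho/(2h_k)\to\infty$.
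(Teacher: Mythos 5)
Your local construction (a single translated, integer-tilted copy of $v_{n_k}$ centered at $\xi_k\approx h^{-1}x_0$) is fine as far as it goes, and you are right that the Laplacian bound is automatic and that the quadratic lower bound holds in an intermediate range of $|\xi|$ after adding an integer constant. (Two side remarks: you cannot pass to a subsequence to force $\|\bar v_{n_k}-v\|_{L^\infty}=o(h_k^2)$, since the ratio is fixed for each $k$; but you also do not need this, because the resulting error is a constant in $\xi$ and a one-sided bound survives adding a large positive integer to $u$.) The fatal problem is the regime $|\xi|\gtrsim h_k^{-1}r$, which you correctly flag as the technical heart but then dispose of with an argument that cannot work. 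The hypotheses give information about $v_{n_k}$ \emph{only} on the ball where $\bar v_{n_k}\to v$ uniformly; outside it, the only constraint is $\Delta^1 v_{n_k}\le K$, which is an upper bound on the Laplacian and imposes no lower bound whatsoever on the function in an exterior region --- $v_{n_k}$ could coincide with $-A|\xi|^2$ for enormous $A$ outside $B_{h^{-1}r}(\xi_k)$ without violating any hypothesis. Consequently your $u$ may genuinely fail the quadratic lower bound far from the origin, so no barrier argument applied to this $u$ can rescue it. Moreover, the maximum principle (Proposition \ref{hlap-compare}) runs the wrong way here: comparing against a quadratic with $\Delta^1 q=K\ge\Delta^1 u$ on a finite annulus bounds $u-q$ from below by its minimum over the \emph{whole} lattice boundary, including the outer boundary where you have no control; and you cannot instead take $\max\{u,q\}$ to enforce the lower bound, since the maximum of two functions with $\Delta^1\le K$ need not satisfy $\Delta^1\le K$.

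The missing idea is the paper's construction of $u$ as an \emph{overlapping pointwise minimum} of translated and tilted copies of $v_n$, one for every center $y\in\Z^d$, each tilted by the rounded tangent plane of a slightly more concave quadratic $\psi(x)=\tfrac12 x^t(D^2v(x_0)-32\ep I)x$. The two-sided estimate $|\bar v_n-\varphi|\le\ep r^2$ (not just the lower bound you use) forces each copy to lie below $\psi$ plus a small constant near its own center and above $\psi$ plus a larger constant on the outer half of its window; hence near any lattice point the minimum is attained only by copies whose windows fully contain a neighborhood of that point, so Proposition \ref{hlap-inf} yields $\Delta^1 u\le K$ \emph{globally}, while every copy stays above $\psi$ minus a constant. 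This covers all of $\Z^d$ using only the values of $v_n$ on the ball where convergence is known, which is exactly what your single-translate-plus-patching scheme cannot do.
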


\begin{proof}
Replacing $v_n$ if necessary by
\begin{equation*}
v_n'(x) := v_n(x + \lfloor h^{-1} x_0 \rceil) - v_n(\lfloor h^{-1} x_0 \rceil)- \lfloor h^{-1} D v(x_0) \rceil \cdot x,
\end{equation*}
we may assume that $x_0 = 0$, $v(0) = 0$, $Dv(0) = 0$, and $v_n(0) = 0$.

Since $v$ is twice differentiable at $0$, we can make $r > 0$ smaller and select a large $n = n_k$ such that
\begin{equation*}
\sup_{B_r} |\bar v_n - \varphi| \leq \ep r^2,
\end{equation*}
where $\varphi(x) := \frac{1}{2} x^t D^2 v(0) x$. Noting that $h^2\varphi(h^{-1}x)=\varphi(x)$, we can undo the scaling of $\bar v_n$, obtaining
\begin{equation*}
\sup_{B_{h^{-1} r}} |v_n - \varphi| \leq \ep h^{-2} r^2.
\end{equation*}
Thus, if we define $\psi(x) := \frac{1}{2}x^t(D^2 v(0) - 32\ep I) x$, we have $\varphi-\psi=16\ep \abs x^2$ and therefore
\begin{equation}
\label{approx-bounds}
\begin{cases}
v_n \leq \psi +  2\ep h^{-2} r^2 & \mbox{in } B_{h^{-1} r/4} \\
v_n \geq \psi +  3\ep h^{-2} r^2 & \mbox{in } B_{h^{-1} r} \setminus B_{h^{-1} r/2}.
\end{cases}
\end{equation}

We define $u$ as an overlapping pointwise minimum of translated and tilted copies of $v_n$,
\begin{equation*}
u(x)  := \min \{  v_{n,y}(x) \mid y \in \Z^d \cap B_{h^{-1} r}(x) \},
\end{equation*}
where
\begin{equation*}
v_{n,y}(x) := v_n(x - y) + \lfloor D \psi(y) \rceil \cdot (x - y) + \lfloor \psi(y) \rceil.
\end{equation*}
The inequalities \eqref{approx-bounds} guarantee that the overlapping works out correctly.
Indeed, for $x \in B_{h^{-1} r / 4}(y)$, we compute
\begin{align*}
v_{n,y}(x) & = v_n(x - y) + \lfloor D \psi(y) \rceil \cdot (x - y) + \lfloor \psi(y) \rceil \\
& \leq \psi(x - y) + \lfloor D \psi(y) \rceil \cdot (x - y) + \lfloor \psi(y) \rceil + 2 \ep h^{-2} r^2 \\
& \leq  \psi(x - y) + D \psi(y) \cdot (x - y) + \psi(y) + 2 \ep h^{-2} r^2 + d^{1/2} h^{-1} r + 1 \\
& = \psi(x) + 2 \ep h^{-2} r^2 + d^{1/2} h^{-1} r + 1.
\end{align*}
A similar computation shows
\begin{equation*}
v_{n,y} \geq \psi + 3 \ep h^{-2} r^2 - d^{1/2} h^{-1} r - 1 \quad \mbox{in } B_{h^{-1} r}(y) \setminus B_{h^{-1} r/2}(y).
\end{equation*}
By making $n$ larger, we may assume that $h^{-1} r / 4 > 1$ and
\begin{equation*}
2\ep h^{-2} r^2 + d^{1/2} h^{-1} r + 1 < 3\ep  h^{-2} r^2 - d^{1/2} h^{-1} r - 1.
\end{equation*}
It follows that
\begin{equation*}
u(x) = \min \{ v_{n,y}(x) \mid y \in \Z^d \cap B_{h^{-1} r/2}(x) \},
\end{equation*}
and therefore
\begin{equation*}
u = \min \{ v_{n,y} \mid y \in \Z^d \cap B_{3h^{-1} r/4}(x) \} \quad \mbox{in } B_{h^{-1} r / 4}(x),
\end{equation*}
for all $x \in \Z^d$. Since $\Delta^1 v_{n,y} \leq K$ and $h^{-1} r/4 > 1$, Proposition \ref{hlap-inf} implies that $\Delta^1 u(x) \leq K$. Moreover, it is easy to check that $u - \psi$ is bounded from below.
\end{proof}

We are now in a position to prove Theorem \ref{main}. In fact, we prove something slightly stronger.

\begin{theorem}
\label{converge}
There are functions $w \in C(\R^d)$ and $s \in L^\infty(\R^d)$ such that $\bar s_n$ converges weakly-$*$ to $s$ in $L^\infty(\R^d)$, $\bar w_n$ converges locally uniformly to $w$ in $C(\R^d)$, and $w$ is a weak solution of $\Delta w = s$ in $\R^d$. Moreover, the function $s$ satisfies $\int_{\R^d} s \,dx = 1$, $0 \leq s \leq 2d - 1$, and $s = 0$ in $\R^d \setminus B_R$ for some $R > 0$.
\end{theorem}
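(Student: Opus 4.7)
The plan is to prove uniqueness of the subsequential limit from Lemma \ref{subsequence}, which immediately upgrades the subsequential convergence to convergence of the full sequences. The stated properties of $s$ then follow by passing the pointwise bounds, mass identity, and support condition for $\bar s_n$ in \eqref{sninequalities} through the weak-$*$ limit, using for the mass a test function $\varphi \in C^\infty_0(\R^d)$ equal to $1$ on $B_R$. To prove uniqueness, suppose for contradiction that two subsequences give distinct limits $\bar w_{n_k} \to w$ and $\bar w_{n'_k} \to w'$, and set $v := w + \Phi$, $v' := w' + \Phi$, so that $\bar v_{n_k} \to v$ and $\bar v_{n'_k} \to v'$ locally uniformly in $\R^d \setminus \{0\}$; both $v$ and $v'$ are nonnegative, supported in $\bar B_R$, and their difference $v - v' = w - w'$ is continuous on all of $\R^d$ and supported in $\bar B_R$.

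I would first locate a point $x_0 \in B_R \setminus \{0\}$ at which $v$ and $v'$ are twice differentiable and the Hessians satisfy $A := D^2 v(x_0) < A' := D^2 v'(x_0)$ strictly as symmetric matrices, with $v(x_0) > v'(x_0) \geq 0$. Assuming WLOG $\sup(w - w') > 0$, the function $w - w'$ vanishes on $\partial B_{R+1}$ but has positive supremum in the interior, so Proposition \ref{abp} yields a positive-measure set of $x \in B_{R+1}$ at which $w - w'$ is twice differentiable, $D^2(w - w')(x) < 0$, and $(w - w')(x) > 0$ (the last inequality guaranteeing $v(x) > v'(x) \geq 0$). Intersecting with the full-measure sets on which $v$ and $v'$ are individually twice differentiable (Proposition \ref{bmo} applied in $\R^d \setminus \{0\}$, where both have bounded Laplacian) and discarding the single point $\{0\}$ produces such an $x_0$.

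Choose $\ep > 0$ with $A' - A - 2\ep I > 0$. Applying Lemma \ref{approx} to $v_{n'_k}$ at $x_0$ (with $K = 2d - 1$) produces $u : \Z^d \to \Z$ satisfying $\Delta^1 u \leq 2d - 1$ and $u(y) \geq \tfrac{1}{2} y^t (A' - \ep I) y$. For each large $k$, let $y_k := \lfloor h_k^{-1} x_0 \rceil$ and $p_k := \lfloor h_k^{-1} D v(x_0) \rceil$, and pick an integer $q_k$ with $h_k^2 q_k = v(x_0) - \delta + O(h_k^2)$ for a small fixed $\delta > 0$ to be chosen. Define
\begin{equation*}
U_k(y) := u(y - y_k) + p_k \cdot (y - y_k) + q_k,
\end{equation*}
which is integer-valued and still satisfies $\Delta^1 U_k \leq 2d - 1$ since affine terms are discrete-harmonic. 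Its rescaling $\bar T_k(x) := h_k^2 U_k(h_k^{-1} x)$ is bounded below, modulo an $O(h_k)$ error from the two floor functions, by the tilted quadratic $v(x_0) + D v(x_0) \cdot (x - x_0) + \tfrac{1}{2}(x - x_0)^t (A' - \ep I)(x - x_0) - \delta$.

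Combining this bound with the Taylor expansion of $v$ at $x_0$ and the uniform convergence $\bar v_{n_k} \to v$ on $\bar B_\rho(x_0)$, on $\partial B_\rho(x_0)$ one has
\begin{equation*}
\bar T_k(x) - \bar v_{n_k}(x) \geq \tfrac{1}{2}(x - x_0)^t (A' - A - \ep I)(x - x_0) - \delta - o(\rho^2) - o_k(1),
\end{equation*}
strictly positive for $\rho$ small (by strict positivity of $A' - A - \ep I$) and $k$ large, provided $\delta$ is chosen accordingly small; on the other hand $\bar T_k(x_0) < \bar v_{n_k}(x_0)$ by roughly $\delta$. Since $v(x_0) > 0$, shrinking $\rho$ further makes $\bar T_k > 0$ throughout $B_\rho(x_0)$. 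Define $v_k^\dagger : \Z^d \to \N$ to equal $v_{n_k}$ outside $h_k^{-1} B_\rho(x_0)$ and the pointwise minimum $\min(U_k, v_{n_k})$ inside. Proposition \ref{hlap-inf} together with the boundary inequality $U_k \geq v_{n_k}$ yields $\Delta^1 v_k^\dagger \leq 2d - 1$ at every lattice site off the origin, while $0 \notin B_\rho(x_0)$ ensures $v_k^\dagger = v_{n_k}$ at the origin and its neighbors; hence $n_k \delta_0 + \Delta^1 v_k^\dagger \leq 2d - 1$. But $v_k^\dagger(y_k) < v_{n_k}(y_k)$, contradicting the least action characterization \eqref{l.least}. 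The main obstacle, matching the incompatible lattices $h_k \Z^d$ and $h_k' \Z^d$, is precisely what Lemma \ref{approx} is designed to overcome, and the strict inequality $A' > A$ (supplied by Propositions \ref{bmo} and \ref{abp}) is what makes the rescalable competitor strictly lower $v_{n_k}$.
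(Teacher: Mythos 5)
This is essentially the paper's own argument: reduce to uniqueness of the subsequential limit, use Propositions \ref{bmo} and \ref{abp} to find a point $x_0\neq 0$ where $D^2v(x_0)<D^2v'(x_0)$ strictly and $v(x_0)>v'(x_0)\geq 0$, feed the $v_{n'_k}$ into Lemma \ref{approx} to obtain a scale-free integer competitor $u$, and glue a tilted, shifted copy of $u$ into $v_{n_k}$ by pointwise minimum to violate the least action principle \eqref{l.least}. The only (harmless) deviations are that you lower $v_{n_k}$ at the center by a macroscopic amount $\approx\delta h_k^{-2}$ rather than by exactly $1$, and that the inequality $U_k\geq v_{n_k}$ should be asserted on an annulus such as $B_\rho(x_0)\setminus B_{\rho/2}(x_0)$ rather than only on the sphere $\partial B_\rho(x_0)$, so that Proposition \ref{hlap-inf} applies at lattice sites whose neighbors straddle the gluing boundary --- your estimate yields this with no extra work.
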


\begin{proof}
Since Lemma \ref{subsequence} gives existence of limits along subsequences, we need only prove uniqueness of the limiting $s$ and $w$. By Proposition \ref{bmo}, $\Delta w = s$ almost everywhere, so it is enough to show that $w$ is unique. Suppose $w, w' \in C(\R^d)$ are distinct and that $\bar w_{n_k} \to w$ and $\bar w_{n'_k} \to w'$ along subsequences locally uniformly in $\R^d$ as $k \to \infty$. Since $w = w' = -\Phi$ outside of $B_R$ for some $R > 0$, we may assume without loss of generality that
\begin{equation*}
\sup_{B_R}  (w - w') > 0 = \sup_{\partial B_R} (w - w').
\end{equation*}
According to Proposition \ref{abp}, we may select a point $a \in \R^d$ such that $a \neq 0$, $w(a) > w'(a)$, both $w$ and $w'$ are twice differentiable at $a$, and $D^2 w(a) \leq D^2 w'(a) - 2 \ep I$ for some $\ep > 0$.

Define $v := w + \Phi$ and $v' := w' + \Phi$. Since $a \neq 0$, $\bar v_{n'_k} \to v'$ uniformly in a neighborhood of $a$ as $k \to \infty$ and $v'$ is twice differentiable at $a$. Using Lemma \ref{approx}, we may select a $u : \Z^d \to \Z$ such that
\begin{equation*}
\Delta^1 u(x) \leq 2d - 1 \quad \mbox{and} \quad u(x) \geq \frac{1}{2} x^t (D^2 v'(a) - \ep I) x \quad \mbox{for all } x \in \Z^d.
\end{equation*}
Since  $D^2 v'(a) - \ep I\geq D^2 v(a)+\ep I$ it follows that
\begin{equation*}
u(x) \geq \frac{1}{2} x^t (D^2 v(a) + \ep I) x \quad \mbox{for all } x \in Z^d.
\end{equation*}

We use a shifted version of $u$ to ``lower'' $v_{n_k}$. For the $a$ chosen above, we define
\begin{equation*}
h_k := n_k^{-1/d}, \quad a_k := \lfloor h_k^{-1} a \rceil,
\end{equation*}
and\begin{equation*}
u_{n_k}(x) := \begin{cases}
\begin{aligned}u(x - a_k) + \lfloor h_k^{-1} Dv(a) \rceil \cdot (x - a_k) \qquad \\ - u(0) + v_{n_k}(a_k) - 1 \end{aligned} & \mbox{if } x \in B_{h_k^{-1} r}(a_k), \\
1 + \max_{\Z^d} v_{n_k} & \mbox{otherwise.}
\end{cases}
\end{equation*}
We claim that for small $r > 0$ and large $n_k$, the function $\tilde v := \min \{ v_{n_k}, u_{n_k} \}$ contradicts the least action principle for $v_{n_k}$. Since
\begin{equation*}
\tilde v(a_k) = u_{n_k} (a_k) = v_{n_k}(a_k) - 1,
\end{equation*}
it is enough to show that $n \delta_0 + \Delta^1 \tilde v \leq 2d - 1$ and $\tilde v \geq 0$.

To show that $n \delta_0 + \Delta^1 \tilde v \leq 2d - 1$, it is enough, by Proposition \ref{hlap-inf}, to show that $v_{n_k} \leq u_{n_k}$ in $B_{h_k^{-1} r}(a_k) \setminus B_{h_k^{-1} r / 2}(a_k)$ and $0 \notin B_{h_k^{-1} r / 2}(a_k)$ for small $r > 0$ and large $n_k$. Since $v$ is twice differentiable at $a$, we have, for small $r > 0$ and large $n_k$,
\begin{equation*}
\bar v_{n_k}(x) \leq \bar v_{n_k}(a) + Dv(a) \cdot (x - a) + \frac{1}{2} (x-a)^t D^2 v(a) (x-a) + \frac{\ep}{8} r^2,
\end{equation*}
for all $x \in B_r(a)$. Undoing the scaling, we discover that
\begin{equation*}
v_{n_k}(x) \leq v_{n_k}(a_k) + h_k^{-1} Dv(a) \cdot (x - a_k) + \frac{1}{2} (x-a_k)^t D^2 v(a) (x-a_k) + \frac{\ep}{8} h_k^{-2} r^2,
\end{equation*}
for all $x \in B_{h_k^{-1} r}(a_k)$. Since
\begin{equation*}
u(x) \geq \frac{1}{2} x^t D^2 v(a) x + \frac{\ep}{4} h_k^{-2} r^2,
\end{equation*}
for all $x \in B_{h_k^{-1} r} \setminus B_{h_k^{-1} r/2}$, we see that
\begin{equation*}
u_{n_k}(x) \geq v_{n_k}(x) + \frac{\ep}{8} h_k^{-2} r^2 - d^{1/2} h_k^{-1} r - u(0) - 1,
\end{equation*}
for all $y \in B_{h_k^{-1} r}(a_k) \setminus B_{h_k^{-1} r/2}(a_k)$. Since the error term on the right-hand side is positive for large $n_k$, we see that
\begin{equation*}
u_{n_k} > v_{n_k} \quad \mbox{in } B_{h_k^{-1} r}(a_k) \setminus B_{h_k^{-1} r/2}(a_k),
\end{equation*}
for small $r > 0$ and large $n_k$.

To show that $\tilde v \geq 0$, it is enough to show that $u_{n_k} \geq 0$ in $B_{h_k^{-1} r}(a_k)$ for small $r > 0$ and large $n_k$. Note that since $v$ is twice differentiable at $a$, $\bar v_{n_k} \to v$ in $B_r(a)$, and $w(a) > w'(a)$, we have
\begin{equation*}
\bar v_{n_k}(a) + Dv(a) \cdot (x - a) + \frac{1}{2} (x - a)^t D^2 v(a) (x - a)  \geq \frac{v(a)}{2} > \frac{v'(a)}{2} \geq 0,
\end{equation*}
for small $r > 0$, large $n_k$, and $x \in B_r(a)$. Thus, for $x \in B_{h_k^{-1} r}(a_k)$, we may compute
\begin{align*}
u_{n_k}(x) & = u(x - a_k) + \lfloor h_k^{-1} Dv(a) \rceil \cdot (x - a_k) - u(0) + v_{n_k}(a_k) - 1 \\
& \geq \frac{1}{2}(x - a_k)^t D^2 v(a) (x - a_k) + h_k^{-1} Dv(a) \cdot (x - a_k) + v_{n_k}(a_k) \\ & \qquad \qquad - u(0) - 1 - d^{1/2} |x - a_k| \\
& \geq h_k^{-2} \frac{v(a)}{2} - u(0) - 1 - h_k^{-1} d^{1/2} \\
& \geq 0,
\end{align*}
for small $r > 0$ and large $n_k$.

The remaining assertions about $s$ follow immediately from \eqref{sninequalities} and our choice of scaling.
\end{proof}

\begin{remark}
\label{characterization}
For those readers familiar with viscosity solutions of fully nonlinear elliptic equations (see \cite{Caffarelli-Cabre}), we point out that the above proof shows that the limit $\bar w_n \to w$ is the unique solution of the obstacle problem
\begin{multline}
\label{obstacle}
w := \inf \{ w' \in C(\R^d) \mid w' \geq - \Phi \mbox{ and } \Delta w' \leq 2d - 1 \mbox{ in } \R^d, \\ \mbox{ and } G(D^2 w' + D^2 \Phi) \leq 0 \mbox{ in } \R^d \setminus \{ 0 \} \},
\end{multline}
where
\begin{multline*}
G(A) := \inf \{ s \in \R \mid \mbox{there is a } u : \Z^d \to \Z \mbox{ such that for all } y \in \Z^d \\ \Delta^1 u(y) \leq 2d - 1 \mbox{ and } u(y) \geq \frac{1}{2} y^t (A - s I) y \},
\end{multline*}
and we interpret the differential inequalities in the sense of viscosity. In fact, our proof that the limit $w$ is unique is exactly a proof that $w$ is the unique solution of \eqref{obstacle} with the viscosity solution terminology stripped out.

Without too much difficulty, one can show that the operator $G$ is Lipschitz and elliptic. However, we have no reason to believe that $G$ is {\em uniformly} elliptic, and thus the existing theory of fully nonlinear elliptic obstacle problems does not apply directly. Whether this theory can be re-worked for $G$ and whether the characterization \eqref{obstacle} is useful remains to be seen.
\end{remark}


\begin{bibdiv}
\begin{biblist}

\bib{Bak-Tang-Wiesenfeld}{article}{
   author={Bak, Per},
   author={Tang, Chao},
   author={Wiesenfeld, Kurt},
   title={Self-organized criticality},
   journal={Phys. Rev. A (3)},
   volume={38},
   date={1988},
   number={1},
   pages={364--374},
   issn={1050-2947},
   review={\MR{949160 (89g:58126)}},
   doi={10.1103/PhysRevA.38.364},
}

\bib{Caffarelli-Cabre}{book}{
   author={Caffarelli, Luis A.},
   author={Cabr{\'e}, Xavier},
   title={Fully nonlinear elliptic equations},
   series={American Mathematical Society Colloquium Publications},
   volume={43},
   publisher={American Mathematical Society},
   place={Providence, RI},
   date={1995},
   pages={vi+104},
   isbn={0-8218-0437-5},
   review={\MR{1351007 (96h:35046)}},
}

\bib{Christ}{book}{
   author={Christ, Michael},
   title={Lectures on singular integral operators},
   series={CBMS Regional Conference Series in Mathematics},
   volume={77},
   publisher={Published for the Conference Board of the Mathematical
   Sciences, Washington, DC},
   date={1990},
   pages={x+132},
   isbn={0-8218-0728-5},
   review={\MR{1104656 (92f:42021)}},
}

\bib{Dhar}{article}{
   author={Dhar, Deepak},
   title={Self-organized critical state of sandpile automaton models},
   journal={Phys. Rev. Lett.},
   volume={64},
   date={1990},
   number={14},
   pages={1613--1616},
   issn={0031-9007},
   review={\MR{1044086 (90m:82053)}},
   doi={10.1103/PhysRevLett.64.1613},
}

\bib{Diaconis-Fulton}{article}{
   author={Diaconis, P.},
   author={Fulton, W.},
   title={A growth model, a game, an algebra, Lagrange inversion, and
   characteristic classes},
   note={Commutative algebra and algebraic geometry, II (Italian) (Turin,
   1990)},
   journal={Rend. Sem. Mat. Univ. Politec. Torino},
   volume={49},
   date={1991},
   number={1},
   pages={95--119 (1993)},
   issn={0373-1243},
   review={\MR{1218674 (94d:60105)}},
}

\bib{Evans}{book}{
   author={Evans, Lawrence C.},
   title={Weak convergence methods for nonlinear partial differential
   equations},
   series={CBMS Regional Conference Series in Mathematics},
   volume={74},
   publisher={Published for the Conference Board of the Mathematical
   Sciences, Washington, DC},
   date={1990},
   pages={viii+80},
   isbn={0-8218-0724-2},
   review={\MR{1034481 (91a:35009)}},
}

\bib{Fey-Levine-Peres}{article}{
   author={Fey, Anne},
   author={Levine, Lionel},
   author={Peres, Yuval},
   title={Growth rates and explosions in sandpiles},
   journal={J. Stat. Phys.},
   volume={138},
   date={2010},
   number={1-3},
   pages={143--159},
   issn={0022-4715},
   review={\MR{2594895 (2011c:82051)}},
   doi={10.1007/s10955-009-9899-6},
}

\bib{Gilbarg-Trudinger}{book}{
   author={Gilbarg, David},
   author={Trudinger, Neil S.},
   title={Elliptic partial differential equations of second order},
   series={Classics in Mathematics},
   note={Reprint of the 1998 edition},
   publisher={Springer-Verlag},
   place={Berlin},
   date={2001},
   pages={xiv+517},
   isbn={3-540-41160-7},
   review={\MR{1814364 (2001k:35004)}},
}

\bib{Kuo-Trudinger}{article}{
   author={Kuo, Hung-Ju},
   author={Trudinger, Neil S.},
   title={Estimates for solutions of fully nonlinear discrete schemes},
   conference={
      title={Trends in partial differential equations of mathematical
      physics},
   },
   book={
      series={Progr. Nonlinear Differential Equations Appl.},
      volume={61},
      publisher={Birkh\"auser},
      place={Basel},
   },
   date={2005},
   pages={275--282},
   review={\MR{2129624 (2006f:35089)}},
   doi={10.1007/3-7643-7317-2-20},
}

\bib{Lawler-Limic}{book}{
   author={Lawler, Gregory F.},
   author={Limic, Vlada},
   title={Random walk: a modern introduction},
   series={Cambridge Studies in Advanced Mathematics},
   volume={123},
   publisher={Cambridge University Press},
   place={Cambridge},
   date={2010},
   pages={xii+364},
   isbn={978-0-521-51918-2},
   review={\MR{2677157}},
}

\bib{Levine-Peres}{article}{
   author={Levine, Lionel},
   author={Peres, Yuval},
   title={Strong spherical asymptotics for rotor-router aggregation and the
   divisible sandpile},
   journal={Potential Anal.},
   volume={30},
   date={2009},
   number={1},
   pages={1--27},
   issn={0926-2601},
   review={\MR{2465710 (2010d:60112)}},
   doi={10.1007/s11118-008-9104-6},
}

\bib{Levine-Propp}{article}{
   author={Levine, Lionel},
   author={Propp, James},
   title={What is $\dots$ a sandpile?},
   journal={Notices Amer. Math. Soc.},
   volume={57},
   date={2010},
   number={8},
   pages={976--979},
   issn={0002-9920},
   review={\MR{2667495}},
}

\bib{Ostojic}{article}{
  author={Ostojic, Srdjan},
  title={Patterns formed by addition of grains to only one site of an abelian sandpile},
  journal={Physica A},
  volume={318},
  number={1-2},
  pages={187-199},
  year={2003},
  issn={0378-4371},
  doi={10.1016/S0378-4371(02)01426-7},
}

\end{biblist}
\end{bibdiv}

\end{document}